\documentclass[3p,times]{elsarticle}
\usepackage{amsthm}
\usepackage{amsmath}

\usepackage{a4wide}
\usepackage{amssymb}
\usepackage{mathrsfs}
\usepackage[hidelinks]{hyperref}
\usepackage{graphicx}
\usepackage{color}
\usepackage{amsfonts}

%
%
%
%


\theoremstyle{remark}
\newtheorem*{remark}{Remark}
\theoremstyle{plain}
\newtheorem{thm}{Theorem}[section]
\newtheorem{lem}[thm]{Lemma} 
\newtheorem{claim}[thm]{Claim} 

\begin{document}
\begin{frontmatter}
\title{Fractal property of the graph homomorphism order}
\author[KAM]{Ji\v{r}\'{\i} Fiala\fnref{g1}}
\ead{fiala@kam.mff.cuni.cz}
\fntext[g1]{Supported by M\v{S}MT \v{C}R grant LH12095 and GA\v{C}R grant P202/12/G061.}
\author[IUUK]{Jan Hubi\v{c}ka\fnref{g2}}
\ead{hubicka@iuuk.mff.cuni.cz}
\fntext[g2]{Supported by grant ERC-CZ LL-1201 of the Czech Ministry of Education and CE-ITI P202/12/G061 of GA\v CR.}
\author[China]{Yangjing Long\fnref{g3}}
\ead{yjlong@sjtu.edu.cn}
\fntext[g3]{Supported by National Natural Science Foundation of China (No. 11271258) and Postdoctoral Science Foundation of China (No. 2016M601576).}
\author[IUUK]{Jaroslav Ne\v{s}et\v{r}il\fnref{g2}}
\ead{nesetril@iuuk.mff.cuni.cz}
\address[KAM]{Department of Applied Mathematics\\ Charles University\\ Prague, Czech Republic}
\address[IUUK]{Computer Science Institute\\ Charles University\\ Prague, Czech Republic}
\address[China]{School of Mathematical Sciences\\ Shanghai Jiao Tong University\\Shanghai, China}
\begin{abstract}
We show that every interval in the homomorphism order of finite undirected graphs is either universal or a gap.
Together with density and universality this ``fractal'' property contributes to the
spectacular properties of the homomorphism order.

We first show the fractal property by using Sparse Incomparability Lemma and then by a more involved elementary argument.
\end{abstract}
\begin{keyword}

graph homomorphism\sep homomorphism order\sep density\sep fractal property
\MSC 05C60 \sep 06A06 \sep 28A80 \sep 05C15
\end{keyword}
\end{frontmatter}

%
%
%

\section{Introduction}
In this note we consider finite simple graphs and countable partial orders. On these graphs we consider
all homomorphisms between them. 
Recall that for graphs $G=(V_G,E_G)$ and $H=(V_H,E_H)$ a \emph{homomorphism $f:G\to H$} is an edge preserving mapping $f:V_G\to V_H$, that is:
$$\{x,y\}\in E_G \implies \{f(x),f(y)\}\in E_H.$$
If there exists a homomorphism from graph $G$ to $H$, we write $G\to H$.

Denote by $\mathscr C$ the class of all finite simple undirected graphs without loops and multiple edges, and by $\leq$ the following order:
$$G\to H\iff G\leq H.$$
$(\mathscr C,\leq)$ is called the {\em homomorphism order}.

The relation $\leq$ is clearly a quasiorder which becomes a partial order when factorized by homomorphism equivalent graphs.
This homomorphism equivalence takes particularly simple form, when we represent each class by the so called core. Here,
a \emph{core} of a graph is its minimal homomorphism equivalent subgraph.
It is well known that up to an isomorphism every equivalence class contains a unique core~\cite{Hell2004}.
 However, for our purposes it is irrelevant whether we consider $(\mathscr C,\leq)$ as a quasiorder or a partial order. For brevity we speak of the homomorphism order in both cases.

The homomorphism order has some special properties, two of which are expressed as follows:

\begin{thm}
\label{thm:univ1}
$(\mathscr C,\leq)$ is (countably) universal.

Explicitly: For every countable partial order $P$ there exists an embedding of $P$ into $(\mathscr C,\leq)$.
\end{thm}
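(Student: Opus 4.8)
The plan is to reduce the statement to a single ``master'' embedding and then to realise the order relation by graphs whose homomorphisms are tightly controlled. First I would observe that it suffices to embed one fixed countable universal partial order $\mathcal U$ into $(\mathscr C,\leq)$: given such an embedding $\Phi\colon\mathcal U\to(\mathscr C,\leq)$ and an arbitrary countable partial order $P$, universality of $\mathcal U$ yields an order-embedding $P\hookrightarrow\mathcal U$, and composing with $\Phi$ embeds $P$ into $(\mathscr C,\leq)$. The existence of a countable universal $\mathcal U$ is classical: the class of all finite partial orders has the hereditary, joint embedding and amalgamation properties, so its \Fraisse{} limit is a countable homogeneous partial order into which every countable partial order embeds.

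Next I would construct, for each element $u\in\mathcal U$, a finite graph $G_u$ so that
$$G_u\to G_v\quad\Longleftrightarrow\quad u\leq v.$$
The design principle is to make comparability correspond to containment of gadgets: I would attach to each element a collection of ``marker'' subgraphs coding its position in $\mathcal U$, arranging that $u\leq v$ forces every marker of $u$ to appear inside $G_v$, which immediately produces a homomorphism $G_u\to G_v$. The easy half of the equivalence, $u\leq v\Rightarrow G_u\to G_v$, then holds by exhibiting this map explicitly.

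The hard part will be the converse, the non-existence of homomorphisms: I must guarantee that $u\not\leq v$ leaves no homomorphism $G_u\to G_v$ at all. This is the genuine obstacle, since ruling out every edge-preserving map requires a structural obstruction rather than an explicit witness. I would handle it by giving the graphs large odd girth together with pairwise ``incompatible'' local gadgets, so that any homomorphism is forced to carry markers to markers in an order-respecting way, and hence to certify $u\leq v$ in $\mathcal U$; a supply of rigid, high-girth gadgets (of the kind delivered by the Sparse Incomparability Lemma) is exactly what forbids the unwanted maps while still admitting the intended ones.

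An alternative, more structural route would be to first realise $\mathcal U$ inside the homomorphism order of finite \emph{directed} graphs---indeed already inside a very sparse subclass such as oriented paths or trees, where orientations make the control of homomorphisms transparent---and then transport the whole picture to undirected graphs through a full embedding of $(\DiGraphs,\leq)$ into $\mathscr C$ that preserves and reflects homomorphisms. The same non-existence issue reappears there as the rigidity of the oriented gadgets, so whichever route one takes, the decisive step is the construction guaranteeing that incomparable poset elements give homomorphically incomparable graphs.
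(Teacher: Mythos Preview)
Your high-level plan---fix one countable universal poset and embed it---is sound and is exactly how the paper proceeds, but the concrete realisations differ substantially, and your primary route is too vague to count as a proof.

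The paper does not invoke \Fraisse{} theory. Instead it exhibits an explicit universal order $(\mathcal P,\leq_{\mathcal P})$: elements are finite sets of odd integers $\geq 3$, and $A\leq_{\mathcal P}B$ means every $a\in A$ is divisible by some $b\in B$. Universality of this order is proved by an elementary decomposition of an arbitrary countable poset into a past-finite ``forward'' part and a future-finite ``backward'' part, the latter being encoded by divisibility of products of odd primes. This concrete choice is what makes the graph encoding trivial: the map $A\mapsto\sum_{p\in A}\overrightarrow{C_p}$ embeds $(\mathcal P,\leq_{\mathcal P})$ into the homomorphism order of disjoint unions of directed cycles, because $\overrightarrow{C_p}\to\overrightarrow{C_q}$ iff $q\mid p$. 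Passage to undirected graphs is then done by the Hedrl\'{\i}n--Pultr indicator construction $G\mapsto G*(I_l,a,b)$ with a rigid indicator $I_l$; no Sparse Incomparability Lemma is needed for universality itself.

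Your ``marker'' construction, by contrast, is not specified enough to verify. In particular, if you work with the random poset $\mathcal U$ and intend $G_v$ to contain the markers of every $u\leq v$, note that each $v$ has infinitely many predecessors, so some additional idea (such as the paper's forward/backward decomposition, or a choice of universal order whose elements are already finite objects) is required to keep each $G_v$ finite. Your alternative route---realise the order in directed graphs and then transport via a homomorphism-reflecting functor to $\mathscr C$---is essentially the paper's indicator method and would work, but you should be aware that the paper's version of this step is entirely explicit and does not rely on Sparse Incomparability; reaching for that lemma here is unnecessary and obscures how elementary the argument can be.
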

Here an {\em embedding} of partial order $(P,\leq)$ to partial order $(P',\leq')$ is an injective function $f:P\to P'$
such that for every $u,v\in P$, $u\leq v'$ if and only if $f(u)\leq'f(v)$.

\begin{thm}
\label{thm:dense}
$(\mathscr C,\leq)$ is dense.

Explicitly: For every pair of graphs $G_1<G_2$ there exists $H$ such that $G_1<H<G_2$.
This holds with the single exception of $K_1<K_2$, which forms the only gap of the homomorphism order of undirected graphs.
\end{thm}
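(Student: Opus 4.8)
The plan is to reduce the statement to a single application of the Sparse Incomparability Lemma after disposing of the degenerate cases by hand. First I would pass to cores and settle the bipartite case: if $G_2$ is bipartite with an edge then $G_2 \to K_2$, so from $G_1 < G_2$ we get $G_1 \to K_2$; were $G_1$ to have an edge it would be homomorphically equivalent to $K_2$, hence to $G_2$, contradicting $G_1 < G_2$. Thus $G_1 = K_1$ and $G_2$ has core $K_2$, which is exactly the exceptional gap, and it is genuinely a gap because any $H$ with $K_1 < H$ has an edge, so $K_2 \to H$ and $H \geq K_2$. I may therefore assume $G_2$ is non-bipartite. I would also dispose of $G_1 = K_1$ directly: a non-bipartite $G_2$ contains an odd cycle, so $K_2 \to G_2$ while $G_2 \not\to K_2$, giving $K_1 < K_2 < G_2$. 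Hence from now on both $G_1$ and $G_2$ have an edge and $G_2$ is non-bipartite.

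For the main case the key observation is that it suffices to produce a single connected graph $F$ with $F \to G_2$, $F \not\to G_1$, and odd girth larger than that of $G_2$: then the disjoint union $H := G_1 \sqcup F$ satisfies $G_1 < H < G_2$. To obtain such an $F$ I would invoke the Sparse Incomparability Lemma for the pair $G_1 < G_2$ with a parameter $\ell$ exceeding the odd girth of $G_2$; this yields a graph of girth greater than $\ell$ that maps to $G_2$ but not to $G_1$, and passing to a suitable connected component preserves both properties.

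It then remains to verify the four homomorphism relations. The easy directions are immediate: $G_1 \to H$ since $G_1$ is a component, and $H \to G_2$ since $G_1 \to G_2$ and $F \to G_2$; moreover $H \not\to G_1$ because $F \not\to G_1$. For the last relation $G_2 \not\to H$, suppose $G_2 \to G_1 \sqcup F$. Each component of $G_2$ maps entirely into $G_1$ or into $F$; a non-bipartite component cannot map into $F$, as $F$ has strictly larger odd girth, while a bipartite component maps into $G_1$ since $G_1$ has an edge. Hence every component lands in $G_1$, forcing $G_2 \to G_1$, a contradiction. The odd-girth point is used twice: an odd cycle of $G_2$ of length equal to its odd girth, being shorter than the girth of $F$, would produce under any homomorphism $G_2 \to F$ a closed odd walk and thus an odd cycle in $F$ too short to exist. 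Assembling these facts gives $G_1 < H < G_2$.

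The hard part is concentrated entirely in the Sparse Incomparability Lemma, namely the existence of a graph of arbitrarily large girth that maps to $G_2$ but not to $G_1$ whenever $G_1 < G_2$. I would treat this as the available black box, its proof via the partite construction or a probabilistic deletion argument being the substantial combinatorial content; everything else is the elementary case analysis producing the gap and the routine verification of the homomorphism relations for $G_1 \sqcup F$.
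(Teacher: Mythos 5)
Your overall strategy --- dispose of the degenerate cases by hand, then place a single sparse connected graph $F$ next to $G_1$ and take $H=G_1+F$ --- is viable and is close in spirit to the paper's Section~\ref{sec:fractalprop}, which also rests on the Sparse Incomparability Lemma. (The paper itself obtains Theorem~\ref{thm:dense} from the literature and as a corollary of Theorem~\ref{thm:main}; its self-contained elementary route is Lemma~\ref{lem:fatgap}, the Ne\v{s}et\v{r}il--Perles product construction $H_1=(G_2\times H)+G_1$ with $H$ of large odd girth and chromatic number, which avoids the Sparse Incomparability Lemma entirely.) As written, however, your reduction has a hole.

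After your case analysis you are left with ``both $G_1$ and $G_2$ have an edge and $G_2$ is non-bipartite,'' but this still includes the case that $G_1$ is bipartite with an edge, i.e.\ homomorphically equivalent to $K_2$ (for instance $K_2<K_3$). Theorem~\ref{thm:sparse} requires \emph{both} $G_1$ and $G_2$ to be non-bipartite, and this is not a cosmetic hypothesis: for $G_1$ equivalent to $K_2$ its conclusion is outright false, since no graph is homomorphism-incomparable with $K_2$ (a graph with an edge admits $K_2\to F$, and an edgeless one satisfies $F\to K_2$). So you cannot invoke the lemma for that pair. The case is easy to repair --- you only need $F\to G_2$, $F\not\to G_1$ and large odd girth, and for $G_1$ equivalent to $K_2$ a long odd cycle $C_k$ does the job, being non-bipartite, of large odd girth, and admitting a homomorphism onto the shortest odd cycle of $G_2$ --- but some explicit argument must be supplied; this is precisely the point where the paper's first proof of Theorem~\ref{thm:main} falls back on Theorem~\ref{thm:dense} to replace a bipartite $G_1$ by a non-bipartite one. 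A second, smaller issue: you choose $\ell$ to exceed the \emph{odd girth} of $G_2$, i.e.\ its shortest odd cycle, but in the component argument for $G_2\not\to H$ you need that no non-bipartite component of $G_2$ maps into $F$, which requires the girth of $F$ to exceed the length of the shortest odd cycle in \emph{every} such component; when $G_2$ is disconnected these lengths can exceed the odd girth of $G_2$. Choosing $\ell>|V(G_2)|$ (or, as the paper does, larger than the longest odd cycle of $G_2$) makes this step go through.
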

As usual, $K_n$ denotes the complete graph with $n$ vertices. We follow the standard graph terminology as e.g. \cite{Hell2004}.
As the main result of this paper we complement these structural results by the following statement:

\begin{thm}
\label{thm:main}
$(\mathscr C,\leq)$ has the fractal property.

Explicitly: For every pair $G_1< G_2$, distinct from $K_1$ and $K_2$
(i.e. the pair $(G_1,G_2)$ is not a gap), there exists an order-preserving embedding $\Phi$ of $\mathscr C$ into the interval
$$[G_1,G_2]_\mathscr C=\{H;G_1<H<G_2\}.$$
\end{thm}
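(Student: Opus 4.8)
The plan is to construct, for a fixed non-gap pair $G_1 < G_2$, an order-preserving embedding of the whole category $\mathscr C$ into the open interval. Let me think about the key ingredients.

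We already have universality (Theorem 1): $\mathscr C$ embeds into itself, so it suffices to embed some universal structure into $[G_1,G_2]_{\mathscr C}$. The natural strategy is to take an arbitrary graph $G$ and send it to a graph $\Phi(G)$ built from three pieces: a fixed "lower" part forcing $G_1 \le \Phi(G)$, a fixed "upper" part forcing $\Phi(G) \le G_2$, and a variable "middle gadget" encoding $G$ itself, so that the homomorphism relation among the $\Phi(G)$'s faithfully reflects the homomorphism relation among the $G$'s.

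So the plan would proceed as follows. First, using density (Theorem 2), I would find a graph strictly between $G_1$ and $G_2$, which gives breathing room and lets me assume the interval is "thick". Next, I would fix a suitable graph, call it a base, sitting inside the interval, and design a construction that glues a copy of (an indicator/encoding of) an input graph $G$ onto this base. The crucial point is that the gluing must be done so that (a) every resulting graph still admits a homomorphism to $G_2$ and receives one from $G_1$, keeping it inside the open interval, and (b) for two input graphs $G, G'$, one has $\Phi(G) \to \Phi(G')$ if and only if $G \to G'$. Part (b) is what makes $\Phi$ an embedding of the order rather than merely a map into the interval.

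The standard tool for guaranteeing (b) is an indicator/gadget construction in which high girth is used to prevent unwanted homomorphisms: one replaces edges (or attaches gadgets) so that the only homomorphisms between the composite graphs are those induced by genuine homomorphisms of the encoded graphs. The Sparse Incomparability Lemma, alluded to in the abstract, is precisely the device that lets one retain the homomorphism relation to $G_2$ (and from $G_1$) while simultaneously killing spurious maps; it supplies, for any graph, a homomorphically equivalent graph of arbitrarily large girth that still maps correctly into a fixed target. Concretely, I would pass to a high-girth surrogate of the encoding graph so that any homomorphism $\Phi(G) \to \Phi(G')$ cannot "cheat" by exploiting short cycles, and must respect the gadget structure, hence descend to a homomorphism $G \to G'$.

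The main obstacle, as I see it, is arranging both bounding conditions at once without collapsing the interval: the construction must simultaneously ensure that $\Phi(G)$ never drops to $G_1$ or climbs to $G_2$ (so the image lands in the \emph{open} interval, strictness at both ends), while still being rigid enough that the order is preserved faithfully. Keeping $\Phi(G)$ strictly above $G_1$ and strictly below $G_2$ for \emph{every} input $G$, uniformly, is delicate, since enlarging the middle gadget to secure faithfulness (via girth) tends to push the graph around in the order. I expect the heart of the argument to be the verification that the gadget can be tuned—using the non-gap hypothesis to exclude the sole exceptional interval $[K_1,K_2]$—so that all three requirements hold at once, and that the Sparse Incomparability Lemma provides exactly the quantitative control needed to close this gap.
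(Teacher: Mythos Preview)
Your broad strategy---build $\Phi(G)$ from a fixed piece anchoring it above $G_1$, a gadget encoding $G$, and arrange that everything maps to $G_2$---matches the shape of the paper's first proof. However, you have mis-stated what the Sparse Incomparability Lemma says and mis-identified its role. It does \emph{not} supply, for a given graph, a homomorphically equivalent graph of arbitrarily large girth; that is impossible in general, since odd girth is a homomorphism invariant. What it supplies is a single connected graph $F$ of prescribed girth that lies strictly below $G_2$ and is \emph{incomparable} with $G_1$. In the paper this fixed $F$ is attached to every $\Phi(G)$ solely to force $\Phi(G)\not\to G_1$, i.e.\ to secure strictness at the lower end; the relation $G_1\le\Phi(G)$ comes trivially from including $G_1$ as a disjoint summand, not from SIL.

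Faithfulness of the encoding---your condition (b)---is obtained by a different mechanism entirely: the Hedrl\'{\i}n--Pultr indicator construction. One replaces the arcs of a directed graph (in the first proof, a disjoint union of long directed cycles representing an element of the universal poset $(\mathcal P,\leq_{\mathcal P})$; in the second proof, an arbitrary oriented graph) by copies of a fixed rigid undirected gadget $(I,a,b)$. It is the rigidity of $I$, together with the arithmetic of cycle lengths, that forces any homomorphism $\Phi(G)\to\Phi(G')$ to descend to $G\to G'$; the high girth furnished by SIL plays no part here. So the ``main obstacle'' you anticipate---tuning a single girth parameter to achieve all three requirements simultaneously---is not how the argument actually runs: the three requirements are handled by three essentially independent devices (the summand $G_1$; the graph $F$ from SIL; the rigid indicator on cycles), and the real work is the routine verification that they do not interfere with one another.
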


Putting otherwise, every nonempty interval in $\mathscr C$ contains a copy of $\mathscr C$ itself.

Theorem~\ref{thm:univ1} was proved first in~\cite{hedrlin1969universal,pultr1980combinatorial} and reproved in~\cite{Hubicka2004,Hubicka2005}. Theorem~\ref{thm:dense} was proved in~\cite{welzl1982color} and particularly simpler proof was given by Perles and Ne\v set\v ril~\cite{nesetril1999}, see also~\cite{Nesetril2000,Hell2004}.

Theorem~\ref{thm:main} was formulated in \cite{nesetril1999} and remained unpublished since. The principal ingredient of the proof is the Sparse Incomparability Lemma \cite{nevetvril1989}.
 In addition, we give yet another proof of Theorem~\ref{thm:main}. In fact, we prove all three Teorems~\ref{thm:univ1}, \ref{thm:dense} and \ref{thm:main} (Theorem~\ref{thm:dense} is a corollary of Theorem~\ref{thm:main}).

First, to make the paper self-contained we also give in Section~\ref{sec:univ} a short and easy proof of universality
of $(\mathscr C,\leq)$ which was developed in \cite{Fialab,Fiala2014} and sketched in \cite{Fiala2015}.
Then, in Section~\ref{sec:fractalprop} we give first proof of Theorem~\ref{thm:main} based on the Sparse Incomparability Lemma~\cite{nevetvril1989,Hell2004}.
Then in Section~\ref{sec:secondproof} we prove a strenghtening of Theorem~\ref{thm:dense} (stated as Lemma~\ref{lem:fatgap}). This will be needed for our second proof of Theorem~\ref{thm:main} 
which is flexible enough for applications. Thus this paper summarizes perhaps surprisingly easy proofs of theorems which originally had difficult proofs.

\section{Construction of a universal order}
\label{sec:univ}

\subsection{Particular universal partial order}
\label{ssec:universal}

Let $(\mathcal P,\leq_{\mathcal P})$ be a partial order, where $\mathcal P$ consists of all finite sets of
odd integers, and where for $A,B\in \mathcal P$ we put $A\leq_{\mathcal P} B$ if and only if for every $a\in
A$ there is $b\in B$ such that $b$ divides $a$.  We make use of the following:

\begin{thm}[\cite{Fiala2014}]
\label{thm:universal}
The order $(\mathcal P,\leq_{\mathcal P})$ is a universal partial order.
\end{thm}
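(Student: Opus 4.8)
The plan is to reduce the statement to a transparent order-theoretic reformulation and then to embed an arbitrary countable poset by a prime-coding argument.

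\emph{Reformulation.} First I would use unique factorization to rewrite $\leq_{\mathcal P}$. Associate to each odd integer its vector of exponents over the odd primes $3,5,7,\dots$, so that an odd integer becomes a finitely supported function $\mathbb{N}\to\mathbb{N}$ and divisibility $b\mid a$ becomes the coordinatewise inequality of the corresponding vectors. Under this identification $\{a\}\leq_{\mathcal P}\{b\}$ means $b\mid a$, and for general sets $A\leq_{\mathcal P}B$ says precisely that every vector coming from $A$ lies coordinatewise above some vector coming from $B$. Equivalently, writing $L$ for the lattice $(\mathbb{N}^{(\omega)},\le)$ of exponent vectors, $A\leq_{\mathcal P}B$ holds if and only if the up-set of $L$ generated by $A$ is contained in the up-set generated by $B$. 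Thus $(\mathcal P,\leq_{\mathcal P})$ is isomorphic to the poset of \emph{finitely generated up-sets of $L$ ordered by inclusion}; this is the form I would actually work with.

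\emph{The embedding.} Let $P$ be an arbitrary countable poset on $\{x_0,x_1,\dots\}$. I would assign to each index $n$ a private odd prime and encode $x_n$ by a finite set $S_n$ of odd integers whose prime supports record both the identity of $x_n$ and a sufficient portion of its down-set in $P$. The essential device is that \emph{shared} prime factors are used to realise comparabilities --- placing the generators for $x_n$ above a common divisor whenever $x_n$ is meant to lie above some $x_m$ --- while \emph{private} primes guarantee injectivity and block unwanted comparabilities. A short computation with up-sets then shows that, with the right bookkeeping, one obtains $S_n\leq_{\mathcal P}S_m$ exactly when $x_n\le_P x_m$, so that $x_n\mapsto S_n$ is an embedding.

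\emph{Main obstacle.} The delicate point is \emph{faithfulness}: one must verify that a domination $S_n\leq_{\mathcal P}S_m$ can hold \emph{only} for the intended pairs, i.e. that no accidental divisibility among the chosen integers creates a spurious relation. This is where unique factorization and the mutual independence of distinct primes are used decisively. A secondary subtlety is the finiteness constraint: since each $S_n$ must be a \emph{finite} set while an element of $P$ may dominate infinitely many others, one cannot afford a separate private prime for each comparability; the down-set information has to be compressed into finitely many generators, and it is exactly this that forces the shared-prime encoding rather than a naive one-prime-per-element scheme. Once faithfulness is checked, universality of $(\mathcal P,\leq_{\mathcal P})$ follows.
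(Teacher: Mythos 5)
Your reformulation of $\leq_{\mathcal P}$ as inclusion of finitely generated up-sets in the exponent-vector lattice is correct, and you have correctly located the two difficulties (faithfulness, and the finiteness of each $S_n$ against possibly infinite down-sets and up-sets in $P$). But the proposal stops exactly where the proof has to start: ``with the right bookkeeping'' and ``the down-set information has to be compressed into finitely many generators'' name the problem without solving it. The entire content of the theorem is the mechanism by which an element $x$ that lies below infinitely many elements of $P$ and above infinitely many others can nevertheless be coded by a \emph{finite} set of integers, each with finitely many prime factors, so that all of these infinitely many comparabilities are realised while no spurious divisibilities arise. As written, the assignment $x_n\mapsto S_n$ is never actually defined, so faithfulness cannot even be checked; this is a genuine gap, not a routine verification.

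The missing idea, which is how the paper resolves it, is a decomposition relative to an enumeration of $P$. Identify the elements of $P$ with odd primes, which also yields a linear ``time'' order $\leq$. Split $\leq_P$ into a forwarding order $\leq_f$ ($x\leq_P y$ and $x\leq y$) and a backwarding order $\leq_b$ ($x\leq_P y$ and $x\geq y$); then $\leq_f$ is past-finite and $\leq_b$ is future-finite. The future-finite part is coded at the level of single integers by divisibility ($\Phi(x)$ is the product of the primes indexing the finitely many $y$ with $x\leq_b y$), and the past-finite part is coded at the level of set membership, $U(x)=\{\Phi(y):\ y\leq_f x\}$. A comparability $x\leq_P y$ is then witnessed in two different ways depending on the enumeration: generators $\Phi(w)\in U(x)$ with $w$ preceding $y$ appear literally in $U(y)$, while those with $w$ following $y$ are divisible by the single generator $\Phi(y)\in U(y)$. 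This two-level split is precisely the ``bookkeeping'' your sketch leaves unspecified; without it, or some equivalent device, the shared-prime/private-prime scheme does not yet constitute a proof.
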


To make the paper self-contained we give a brief proof of this assertion.
(See also \cite{hedrlin1969universal,Hubicka2004,Hubicka2011} for related constructions of universal
partial orders.) The proof of Theorem~\ref{thm:universal} follows from two simple lemmas.

We say that a countable partial order is {\em past-finite} if every down-set $x^\downarrow = \{y; y\leq x\}$ is
finite. A countable partial order is {\em past-finite-universal}, if it contains every
past-finite partial order as a suborder.
{\em Future-finite} and {\em future-finite-universal} orders are defined analogously with respect to up-sets $x^\uparrow = \{y; y\geq x\}$.

Let $P_f(X)$ denote the set of all finite subsets of $X$. The following lemma
extends a well known fact about representing finite partial orders by sets
ordered by the subset relation.

\begin{lem}
\label{lem:pastfiniteuniv}
For any countably infinite set $X$, the partial order $(P_f(X),\subseteq)$ is past-finite-universal.
\end{lem}

\begin{proof}
Consider an arbitrary past-finite order $(Q,\leq_Q)$. Without loss of generality we may assume that $Q\subseteq X$. 
Let $\Phi$ be the mapping that assigns every $x\in Q$ its down-set, i.e. $\Phi(x) = \{y\in Q; y\leq x\}$. 
It is easy to verify that $\Phi$ is indeed an embedding $(Q,\leq_Q)\to(P_f(X),\subseteq)$.
\end{proof}

By the {\em divisibility partial order}, denoted by $(\mathbb{N},\leq_d)$, we mean the partial order on positive integers, where $n \leq_d m$ if and only if $n$ is divisible by $m$.
Denote by $\mathbb{Z}_o$ the set of all odd integers $n$, $n\geq 3$.

\begin{lem}
\label{lem:futurefiniteuniv}
The divisibility partial order $(\mathbb{Z}_o,\leq_d)$ is future-finite-universal.
\end{lem}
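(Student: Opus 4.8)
The plan is to exploit the duality between future-finite and past-finite orders, reducing the statement to Lemma~\ref{lem:pastfiniteuniv}. Observe that a partial order is future-finite-universal exactly when its order-dual is past-finite-universal, since reversing the order exchanges up-sets with down-sets while preserving the suborder relation. Consequently, for any countably infinite set $X$ the reverse-inclusion order $(P_f(X),\supseteq)$ is future-finite-universal: its up-sets $\{B : B\subseteq A\}$ are finite, and it is the dual of the past-finite-universal order $(P_f(X),\subseteq)$. It therefore suffices to embed $(P_f(X),\supseteq)$, for a suitable choice of $X$, into $(\mathbb{Z}_o,\leq_d)$.

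First I would take $X$ to be the (countably infinite) set of odd primes and define the map $E$ sending a finite set $S$ of odd primes to the product $E(S)=\prod_{p\in S}p$. By unique factorization this is injective, and for nonempty $S$ the value $E(S)$ is an odd integer at least $3$, hence lies in $\mathbb{Z}_o$. The key computation is that for finite sets $S,T$ of odd primes one has $T\subseteq S$ if and only if $E(T)\mid E(S)$, again by unique factorization and the distinctness of the primes involved. Translating through the definitions, $S\supseteq T$ holds if and only if $E(S)\leq_d E(T)$, so $E$ is an order-embedding of $(P_f(X),\supseteq)$ into $(\mathbb{Z}_o,\leq_d)$ once we restrict to nonempty sets.

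To combine these, given a future-finite order $(Q,\leq_Q)$ I would identify the countable set $Q$ with distinct odd primes $p_y$, $y\in Q$, and set $\Phi(x)=\prod_{y\geq_Q x}p_y$. Since the up-set $x^\uparrow$ is finite and contains $x$, this is a well-defined nonempty product, so $\Phi(x)\in\mathbb{Z}_o$. The embedding property follows by composing the two steps above: $x\leq_Q x'$ is equivalent to $(x')^\uparrow\subseteq x^\uparrow$, which is equivalent to $\Phi(x')\mid\Phi(x)$, i.e. to $\Phi(x)\leq_d\Phi(x')$; injectivity is then automatic from antisymmetry of $\leq_Q$.

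The argument is short, so the only real obstacle is bookkeeping the direction of the order. Two reversals are in play --- divisibility orders multiples below their divisors, and the passage from a future-finite order to its finite up-sets is itself order-reversing --- and the hard part will be checking that these two reversals compose to give a genuine order-embedding rather than an order-reversing map. I would verify both implications of each biconditional explicitly to be sure the directions line up.
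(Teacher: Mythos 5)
Your proof is correct and takes essentially the same route as the paper: the paper's (very terse) proof likewise applies Lemma~\ref{lem:pastfiniteuniv} with $X=\mathbb{P}$ the odd primes and encodes finite sets of primes by their products, so that inclusion of up-sets becomes divisibility. Your version simply spells out the dualization, the nonemptiness of up-sets (avoiding the product $1\notin\mathbb{Z}_o$), and the two order reversals that the paper leaves implicit.
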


\begin{proof}
Denote by $\mathbb P$ the set of all odd prime numbers.  Apply Lemma \ref{lem:pastfiniteuniv} for $X=\mathbb P$.
Observe that $A\in P_f(\mathbb P)$ is a subset of $B\in P_f(\mathbb P)$ if and only if $\prod_{p\in A} p$ divides $\prod_{p\in B} p$.
\end{proof}

\begin{proof}[Proof of Theorem \ref{thm:universal}]
Let $(Q,\leq_Q)$ be a given partial order. Without loss of generality we may assume that
$Q$ is a subset of $\mathbb{P}$. This way we obtain also the usual linear order $\leq$ 
(i.e. the comparison by the size) on the elements of $Q$.  
In the following construction, the order $\leq$ determines in which sequence the elements of $Q$ are processed 
(it could be also interpreted as the time of creation of the elements of $Q$).

We define two new orders on $Q$: the \emph{forwarding order} $\leq_f$ and the \emph{backwarding order} $\leq_b$ as follows:
\begin{enumerate}
\item We put $x\leq_f y$ if and only if $x\leq_Q y$ and $x \leq y$.
\item We put $x\leq_b y$ if and only if $x\leq_Q y$ and $x \geq y$.
\end{enumerate}

Thus the partial order $(Q,\leq_Q)$ has been decomposed into $(Q,\leq_f)$ and $(Q,\leq_b)$.
For every vertex $x\in Q$ both sets $\{y; y\leq_f x\}$ and $\{y; x\leq_b y\}$ are finite.
It follows that $(Q,\leq_f)$ is past-finite and that $(Q,\leq_b)$ is future-finite.

Since $(\mathbb{Z}_o,\leq_d)$ is future-finite-universal (Lemma~\ref{lem:futurefiniteuniv}), there is an embedding $\Phi: (Q,\mathop{\leq_b}) \to (\mathbb{Z}_o,\leq_d)$. The desired embedding $U:(Q,\leq_Q) \to (\mathcal P,\leq_{\mathcal P})$ is obtained by representing each $x\in Q$ by a set system $U(x)$ defined by (see Figure~\ref{fig:poset}): 
$$U(x)=\{\Phi(y); y\leq_f x\}.$$

\begin{figure}
\centerline{\includegraphics{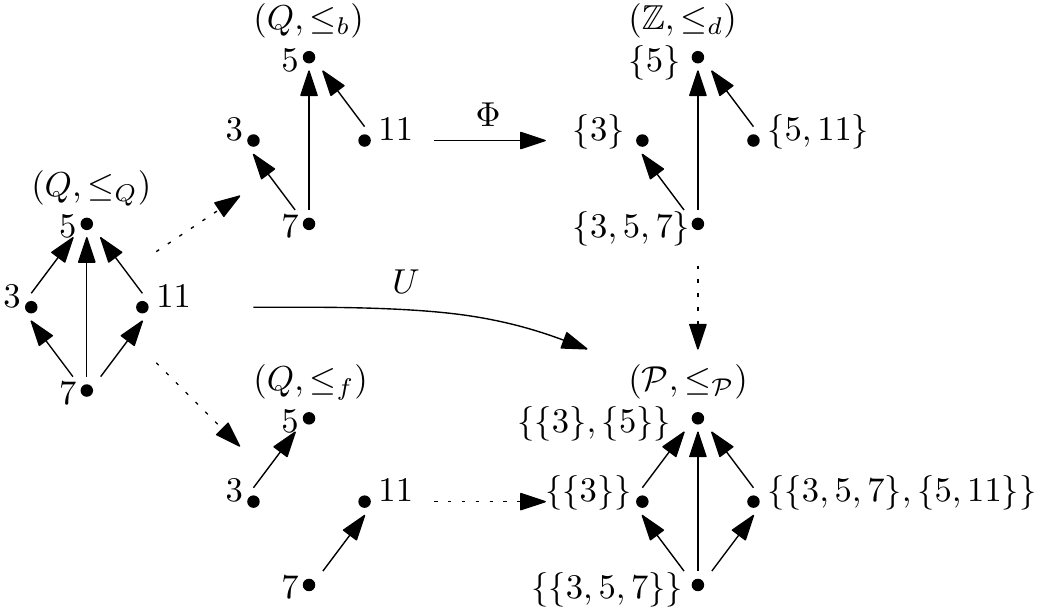}}
\caption{Example of the construction of an embedding $U:(Q,\leq_Q) \to (\mathcal P,\leq_{\mathcal P})$.}
\label{fig:poset}
\end{figure}

To argue the correctness we first show that $U(x)\leq_{\mathcal P} U(y)$ implies $x\leq_Q y$.
From the definition of $\leq_{\mathcal P}$ and the fact that $\Phi(x)\in U(x)$ follows that at least one $w\in Q$ exists, such that
$\Phi(w)\in U(y)$ and $\Phi(x)\leq_d \Phi(w)$. By the definition of $U$, $\Phi(w)\in U(y)$ if and only if $w\leq_f y$. By the definition of $\Phi$, $\Phi(x)\leq_d \Phi(w)$ if and only if $x\leq_b w$.
It follows that $x\leq_b w\leq_f y$ and thus also $x\leq_Q w\leq_Q y$ and consequently $x\leq_Q y$.

To show that $x\leq_Q y$ implies $U(x)\leq_{\mathcal P} U(y)$ we consider two cases.
\begin{enumerate}
\item When $x\leq y$ then $U(x)\subseteq U(y)$ and thus also $U(x)\leq_{\mathcal P} U(y)$.
\item Assume $x>y$ and take any $w\in Q$ such that $\Phi(w)\in U(x)$.  From the definition of $U(x)$ we have
$w\leq_f x$. Since $x\leq_Q y$ we have $w\leq_Q y$.
If $w\leq y$, then $w\leq_f y$ and we have $\Phi(w)\in U(y)$. In the other case if $w>y$ then $w\leq_b y$ and thus $\Phi(w)\leq_d \Phi(y)$. Because the choice of $w$ is arbitrary, it follows that $U(x)\leq_{\mathcal P} U(y)$.
\end{enumerate}%
\end{proof}

Clearly, as in e.g.~\cite{Hubicka2004} this can be interpreted as Alice-Bob game played on finite partial orders. Alice always wins.

\subsection{Representing divisibility}

Denote by $\overrightarrow{C_p}$ the directed cycle of length $p$, i.e. the graph $(\{0,1,\ldots, p-1\},\{(i,i+1); i=0,1,\ldots, p-1\})$, where addition is performed modulo $p$.
Denote by $\mathcal D$ the class of disjoint unions of directed cycles.
\begin{thm}
\label{thm:univ}
The homomorphism order $(\mathcal D,\leq)$ is universal.
\end{thm}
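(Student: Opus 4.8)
The plan is to transfer universality from the order $(\mathcal P,\leq_{\mathcal P})$ of Theorem~\ref{thm:universal} by exhibiting a single order-embedding of $(\mathcal P,\leq_{\mathcal P})$ into $(\mathcal D,\leq)$; universality of $\mathcal D$ then follows by composing embeddings. The natural candidate sends a finite set $A$ of odd integers to the disjoint union of the corresponding directed cycles, $\Psi(A)=\bigcup_{a\in A}\overrightarrow{C_a}$, which is by definition a member of $\mathcal D$.

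The first and central step is the homomorphism computation for directed cycles: $\overrightarrow{C_p}\to\overrightarrow{C_q}$ if and only if $q$ divides $p$. For the ``if'' direction, when $q\mid p$ the map $i\mapsto(i\bmod q)$ is a homomorphism, since every edge $(i,i+1)$ is sent to the edge $(i\bmod q,(i+1)\bmod q)$ of $\overrightarrow{C_q}$, and the wrap-around edge $(p-1,0)$ closes up correctly precisely because $p\equiv 0\pmod q$. For the ``only if'' direction I would use a winding-number argument: under any homomorphism $f\colon\overrightarrow{C_p}\to\overrightarrow{C_q}$ each of the $p$ consecutive edges advances the image by $+1$ modulo $q$; traversing the whole cycle returns the image to its start, forcing $p\equiv 0\pmod q$, i.e.\ $q\mid p$.

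Next I would record the standard fact that a connected digraph maps into a disjoint union of digraphs if and only if it maps into one of the components (the image of a connected graph is connected, hence lies in a single component). Applying this to $\Psi(A)$ and $\Psi(B)$ gives $\Psi(A)\to\Psi(B)$ exactly when for each $a\in A$ there is some $b\in B$ with $\overrightarrow{C_a}\to\overrightarrow{C_b}$. Combining this with the cycle computation, $\Psi(A)\leq\Psi(B)$ holds iff for every $a\in A$ there is $b\in B$ dividing $a$ — which is precisely the definition of $A\leq_{\mathcal P}B$. Thus $\Psi$ is order-preserving and order-reflecting, hence an order-embedding (injectivity on equivalence classes follows from reflecting the order), and universality of $(\mathcal D,\leq)$ follows from Theorem~\ref{thm:universal}.

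A minor point to dispatch is that a ``cycle'' of length $1$ would be a loop; but the embedding built in the proof of Theorem~\ref{thm:universal} uses only integers from $\mathbb{Z}_o$, i.e.\ odd integers $\geq 3$, so we may assume $A\subseteq\mathbb{Z}_o$ and every $\overrightarrow{C_a}$ is loopless. The main obstacle is genuinely the ``only if'' half of the cycle computation, where the modular counting is the one step requiring a real argument; everything else is bookkeeping built on the definition of $\leq_{\mathcal P}$.
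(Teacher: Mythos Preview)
Your proposal is correct and follows essentially the same approach as the paper: map each $A\in\mathcal P$ to the disjoint union $\sum_{a\in A}\overrightarrow{C_a}$, use that $\overrightarrow{C_p}\to\overrightarrow{C_q}$ iff $q\mid p$, and conclude that this gives an order-embedding of $(\mathcal P,\leq_{\mathcal P})$ into $(\mathcal D,\leq)$. You supply more detail than the paper (the winding-number argument and the component observation are only stated, not justified, there), and you correctly note the $\mathbb{Z}_o$ restriction that avoids loops.
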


\begin{proof}
Observe first that a homomorphism $f:\overrightarrow{C_p}\to \overrightarrow{C_q}$ between two cycles $\overrightarrow{C_p}$ and $\overrightarrow{C_q}$ exists if and only if $q$ divides $p$. 

Consequently, for two collections of disjoint cycles $\sum_{p\in A}\overrightarrow{C_p}$ and $\sum_{q\in B}\overrightarrow{C_q}$
a homomorphism 
$$f:\sum_{p\in A}\overrightarrow{C_p}\to \sum_{q\in B}\overrightarrow{C_q}$$ 
exists if and only if 
$$A\leq_{\mathcal P} B,$$
with respect to the universal partial order $(\mathcal P,\leq_{\mathcal P})$ of 
Theorem~\ref{thm:universal}. 

Since we have used odd primes in the proof of Lemma~\ref{lem:futurefiniteuniv}, the minimum of each set in $\mathcal P$ is at least three.
Hence, each $A\in {\mathcal P}$ corresponds to a disjoint union of odd cycles.
\end{proof}

\begin{remark}
Denote by $\overrightarrow{\mathscr C}$ the class of all directed graphs.
Theorem~\ref{thm:univ} yields immediately that the order $(\overrightarrow{\mathscr C},\leq)$ is also universal.

The class of disjoint union of directed odd cycles is probably the simplest class for which the homomorphism
order is universal. However note that here the key property is that
objects are not connected and contains odd cycles of unbounded length.
If we want to obtain connected graphs with bounded cycles then we have
to refer to \cite{Hubicka2004,Hubicka2011,Hubicka2005} where it is proved that that the
class of finite oriented trees $\mathcal T$ and even the class of finite oriented
paths form universal partial orders.  These strong notions are not needed in this
paper. However note that from our results here it also follows that not only the class
of planar graphs but also the class of outer-planar graphs form a universal partial
order.
\end{remark}

\section{The fractal property}
\label{sec:fractalprop}

To prove Theorem~\ref{thm:main}, we use the following result proved by Ne\v{s}et\v{r}il and R\"odl by non-constructive methods~\cite{nevetvril1989}. 
Later, non-trivial constructions were given by Matou\v{s}ek-Ne\v set\v ril \cite{matousek2004} and Kun \cite{kun2013}:

\begin{thm}[Sparse Incomparability Lemma \cite{nevetvril1989}, see e.g. Theorem 3.12 of \cite{Hell2004}]
\label{thm:sparse}
Let $l$ be positive integer. For any non-bipartite graphs $G_1$ and $G_2$ with $G_1 < G_2$, there exists a connected graph $F$ such that
\begin{itemize}
\item $F$ is (homomorphism) incomparable with $G_1$ (i.e. $F\not \leq G_1$ and $G_1\not \leq F$);
\item $F < G_2$; and
\item $F$ has girth at least $l$, where the girth of a graph is the length of its shortest cycle.
\end{itemize}
\end{thm}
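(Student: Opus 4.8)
The plan is to prove this by the probabilistic method, isolating the single genuinely hard requirement and getting the other four essentially for free. Write $g_i=\lvert V(G_i)\rvert$ and let $o_1,o_2$ be the odd girths of $G_1,G_2$, both finite since these graphs are non-bipartite. We may assume $l>\max(o_1,o_2)$: replacing the given $l$ by a larger integer only strengthens the girth requirement. First I would observe that the two ``reverse'' non-homomorphisms come for free from the girth bound. If $G_i\to F$ then an odd cycle of $G_i$ maps to a closed odd walk of length $o_i$ in $F$, forcing an odd cycle of length $\le o_i<l$ in $F$ and contradicting girth $\ge l$; hence $G_1\not\to F$ and $G_2\not\to F$. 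Thus it suffices to build a high-girth $F$ with $F\to G_2$ and $F\not\to G_1$: then $F\to G_2$ with $G_2\not\to F$ gives $F<G_2$, and $F\not\to G_1$ with $G_1\not\to F$ gives incomparability with $G_1$.

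To force $F\to G_2$ by construction I would use a \emph{fibered random graph} over $G_2$. Fix $V(G_2)=\{1,\dots,k\}$, put $V=V(G_2)\times[n]$, and for each edge $\{i,j\}\in E(G_2)$ insert each of the $n^2$ potential edges between the fibres $\{i\}\times[n]$ and $\{j\}\times[n]$ independently with probability $p=n^{-1+\varepsilon}$, where $\varepsilon=1/(4l)$. By design the projection $(i,a)\mapsto i$ is a homomorphism $F\to G_2$, and every cycle of $F$ projects to a closed walk in $G_2$. Since $G_2$ has only constantly many closed walks of each length $s\le l$, and each lifts in at most $n^{s}$ ways, the expected number of cycles of length at most $l$ is $O\bigl(\sum_{s\le l}n^{s}p^{s}\bigr)=O(n^{l\varepsilon})=O(n^{1/4})$, so by Markov's inequality the number of such short cycles is at most $t:=n^{1/2}$ with probability tending to $1$.

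The core step, and the main obstacle, is a union bound showing that with probability tending to $1$ \emph{every} subgraph of $F$ obtained by deleting at most $t$ vertices still fails to map to $G_1$. Fix a map $\phi\colon V\to V(G_1)$ and a set $S$ with $\lvert S\rvert\le t$. For each fibre let $c_i\in V(G_1)$ be a colour that $\phi$ takes on at least $m:=n/g_1$ vertices of $\{i\}\times[n]$. The induced map $i\mapsto c_i$ is \emph{not} a homomorphism $G_2\to G_1$, because $G_1<G_2$ gives $G_2\not\to G_1$ (this is exactly where strictness is used), so some edge $\{i,j\}\in E(G_2)$ has $\{c_i,c_j\}\notin E(G_1)$. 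Picking $m$ vertices of colour $c_i$ in fibre $i$ and $m$ of colour $c_j$ in fibre $j$, the $m^2$ pairs between them are \emph{dangerous}: any one present in $F$ witnesses that $\phi$ is not a homomorphism. Deleting $S$ kills at most $tm$ of them, leaving at least $m^2-tm\ge m^2/2$ dangerous potential edges; hence $\phi$ restricts to a homomorphism $F-S\to G_1$ with probability at most $(1-p)^{m^2/2}\le e^{-pm^2/2}=e^{-n^{1+\varepsilon}/(2g_1^2)}$. Summing over the at most $g_1^{kn}$ maps $\phi$ and the at most $(kn)^{t}$ sets $S$ gives failure probability at most $e^{kn\ln g_1+t\ln(kn)-n^{1+\varepsilon}/(2g_1^2)}$, which tends to $0$ because $n^{1+\varepsilon}=\omega(n\log n)$ dominates both $kn\ln g_1$ and $t\ln(kn)=o(n)$. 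The reason the count of dangerous edges must be \emph{quadratic} in $n$ (so that $pm^2\gg n$) rather than merely positive is precisely to beat the $g_1^{kn}$ colourings while remaining robust to the deletions.

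Finally I would fix an outcome in which both events hold, delete one vertex from each short cycle (at most $t$ vertices, a set $S_0$) to obtain $F'=F-S_0$ of girth greater than $l$. As a subgraph $F'\to G_2$, and by the robust event $F'\not\to G_1$. Since a disjoint union admits a homomorphism to $G_1$ iff each component does, some connected component $F''$ has $F''\not\to G_1$. This $F''$ inherits $F''\to G_2$ and girth $\ge l$, and being a subgraph of girth exceeding $\max(o_1,o_2)$ it also satisfies $G_1\not\to F''$ and $G_2\not\to F''$; moreover $F''$ is non-bipartite, since a bipartite or one-vertex component would map into the edge of $G_1$ and contradict $F''\not\to G_1$. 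Hence $F''$ is the desired connected graph: $F''<G_2$ and $F''$ is incomparable with $G_1$.
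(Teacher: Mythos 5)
The paper does not actually prove this statement: it imports the Sparse Incomparability Lemma as a black box from Ne\v{s}et\v{r}il--R\"odl \cite{nevetvril1989} (see Theorem 3.12 of \cite{Hell2004}), so there is no internal proof to compare against. Your argument is a correct, self-contained probabilistic proof in the spirit of the original non-constructive one. The reduction at the start is sound: once $l$ exceeds both odd girths, girth $\ge l$ alone kills $G_1\to F$ and $G_2\to F$, so only $F\to G_2$ and $F\not\to G_1$ need to be engineered. The fibered random graph over $G_2$ gives $F\to G_2$ by construction; the short-cycle count $O(n^{l\varepsilon})=O(n^{1/4})$ versus the deletion budget $t=n^{1/2}$ is consistent; and the key union bound is handled correctly -- the dangerous edge set is determined by $\phi$ and $S$ alone (majority colours $c_i$, a non-edge $\{c_i,c_j\}$ guaranteed by $G_2\not\to G_1$, which is exactly where strictness of $G_1<G_2$ enters), the deletion of $S$ removes only $tm=o(m^2)$ of the $m^2$ dangerous pairs, and $e^{-pm^2/2}=e^{-\Theta(n^{1+\varepsilon})}$ beats the $g_1^{kn}(kn)^t$ union. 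The endgame (delete one vertex per short cycle, pass to a component not mapping to $G_1$, note it must be non-bipartite since $G_1$ contains an edge) is also correct. The only stylistic quibble is that the count of sets $S$ should be $\sum_{s\le t}\binom{kn}{s}\le (t+1)(kn)^t$ rather than $(kn)^t$, which changes nothing. What your route buys over the citation is an explicit, elementary proof; what it does not give is the constructive version of Matou\v{s}ek--Ne\v{s}et\v{r}il or Kun, but the theorem as stated does not require that.
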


In the sequel we combine the Sparse Incomparability Lemma and the universality of $(\mathcal D,\leq)$, together with the standard indicator 
technique developed by Hedrl\'{\i}n and Pultr~\cite{hedrlin1964,Hell2004}.

The essential construction of this method takes 
an oriented graph $G$ and a graph $I$ with two distinguished vertices $a$, $b$ and creates a graph $G*(I,a,b)$, obtained by substituting every arc $(x,y)$ of $G$ by a copy of the graph $I$, where $x$ is identified with $a$ and $y$ is identified with $b$, see Figure~\ref{fig:suffices2} for an example. 

\begin{figure}
\centerline{\includegraphics{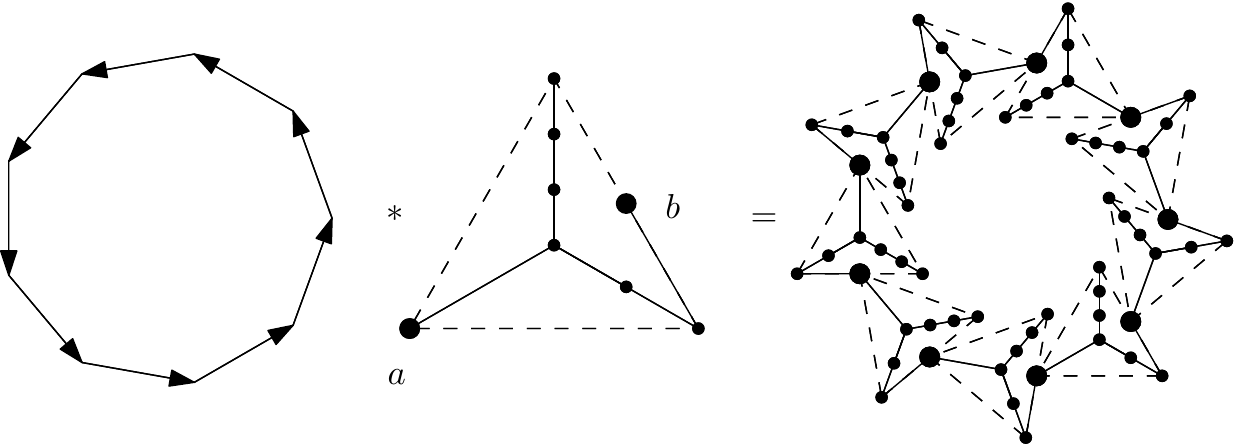}}
\caption{Construction of $\protect\overrightarrow{C_p}*(I,a,b)$ .}
\label{fig:suffices2}
\end{figure}

\begin{proof}[Proof of Theorem \ref{thm:main}]
Let be given undirected graphs $G_1<G_2$ not forming a gap. By our assumptions $G_2$ is a core distinct from $K_2$ as otherwise $G_1=K_1$. (If $G_2=K_2$ then $K_1=K_1$ and we have  a gap). We may also assume without loss of generality that $G_1$ is not bipartite since in such a case we may replace $G_1$ by a graph $G_1<G_1'<G_2$ (given by Theorem~\ref{thm:dense}), making the interval $[G_1',G_2]_\mathscr C$ even narrower than $[G_1,G_2]_\mathscr C$. Because all bipartite graphs are homomorphically equivalent it follows that $G_1'$ is non-bipartite.

Let $l\ge 5$ be any odd integer s.t. the longest odd cycle of $G_2$ has length at most $l$.

For the indicator we use the graph $I_l$, depicted in Figure~\ref{fig:indicator}.
The graph $I_l$ can be viewed either as a subdivision of $K_4$, or as 3 cycles of length $l+2$ amalgamated together.
\begin{figure}
\centerline{\includegraphics{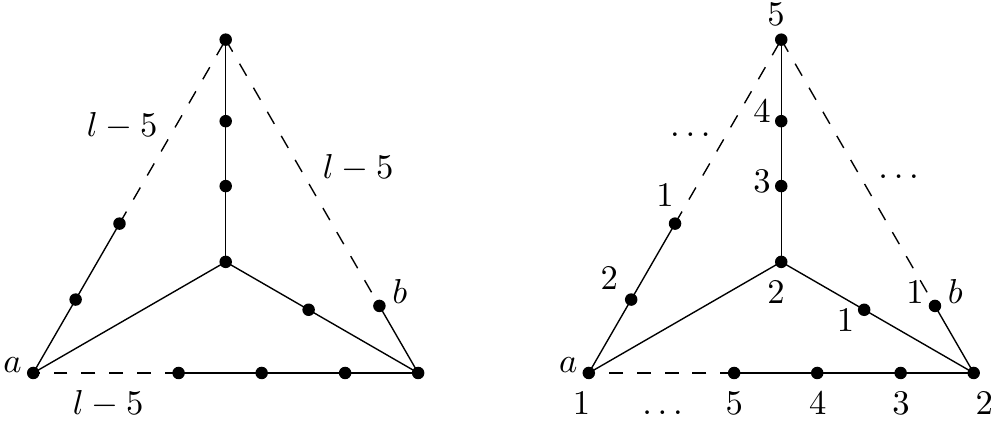}}
\caption{The indicator $I_l$ and its homomorphism to $C_l$. Dashed lines represent paths with $l-5$ internal vertices.} 
\label{fig:indicator}

\end{figure}
The indicator $I_l$ is rigid, i.e. the only homomorphism $I_l\to I_l$ is the identity~\cite[Proposition 4.6]{Hell2004}. Note also that $I_l$ allows a homomorphism to the undirected cycle of length $l$, as is also depicted in Figure~\ref{fig:indicator}. 

We continue with the construction of a graph $H_A$ from a set of odd integers $A\in {\mathcal P}$. 
Let $F$ be a connected graph satisfying the conclusions of the Sparse Incomparability Lemma. We fix an arbitrary vertex $u$ of $F$.

Then, given a positive integer $p\geq 3$, we apply the indicator $I_l,a,b$ on the directed cycle of length $l\cdot p$ to obtain  $\overrightarrow{C_{lp}}*(I_l,a,b)$.
(Observe that $\overrightarrow{C_{lp}} * (I_l,a,b) \to \overrightarrow{C_{lq}} * (I_l,a,b)$ if and only if $q$ divides $p$.)
 We then join any vertex of the original cycle $\overrightarrow{C_{lp}}$ to $u$ by a path of length $|V_{G_2}|$, see Figure~\ref{fig:graphH}.  

\begin{figure}
\centerline{\includegraphics{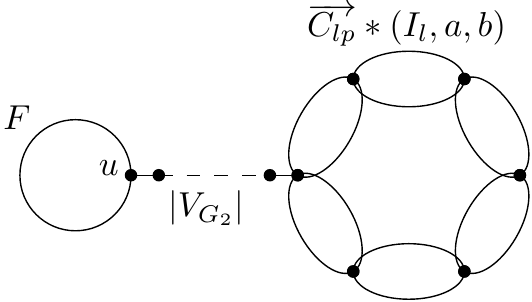}}
\caption{Graph $H_p$.}
\label{fig:graphH}
\end{figure}

Observe that the resulting graph $H_p$ allows a homomorphism to $G_2$,
since:
\begin{enumerate}
\item There exists a homomorphism $f:F \to G_2$ by Theorem~\ref{thm:sparse};
\item the indicator $I_l$ has a homomorphism to a cycle of length $l$, which can be simply transformed to a homomorphism $g$ to any odd cycle of length $l'\le l$ in $G_2$ (by the choice of $l$);
\item the mapping $g$ could be chosen s.t. $g(a)=g(b)$, hence it can be extended to all vertices of $\overrightarrow{C_p}$; 
\item the distance between the image of $u$ and the cycle of length $l'$ is at most $|V_{G_2}|$, therefore both homomorphisms $f$ and $g$ can be combined together and extended to the whole graph $H_p$ straightforwardly.
\end{enumerate}

To complete the construction of $H_A$, we put 
$$H_A=\sum_{p\in A} H_p + G_1.$$ 

The conclusion of Theorem~\ref{thm:main} follows from the following three properties:

\begin{enumerate}
\item For every $A\in {\mathcal P}: G_1 < H_A$.

The $\le$ inequality is obvious, since $G_1$ is a component of each $H_A$.

Since $F$ is a subgraph of each $H_p$, by Theorem~\ref{thm:sparse} there is no homomorphism $H_A\to G_1$ whenever $A$ is nonempty.

\item For every $A\in {\mathcal P}: H_A < G_2$.

The existence of homomorphisms $H_p\to G_2$ and $G_1 \to G_2$ yields a homomorphism $H_A \to G_2$.

As $G_2 \not\leq F$, and as the shortest cycle in $\overrightarrow{C_{lp}}*(I_l,a,b)$ has length $l+2$, which is by the choice of $l$ longer than the length of any odd cycle in $G_2$, there is no homomorphism $G_2 \to H_A$.
 
\item For every $A, B \in {\mathcal P}: H_A \to H_B$ if and only if $A\leq_{\mathcal P} B$.

It is easy to see that $q$ divides $p$ iff $\overrightarrow{C_{lp}}*(I_l,a,b)\to
\overrightarrow{C_{lq}}*(I_l,a,b)$. This is a standard argument.
Note that the paths between $F$ and $\overrightarrow{C_{lp}}$ in $H_p$, and between $F$ and $\overrightarrow{C_{lq}}$ in $H_q$ have the same length and the  
vertex $u$ of attachment has been chosen in the same way in both cases.
Therefore, $H_p\to H_q$ and consequently, $A\leq_{\mathcal P} B$ implies $H_A\to H_B$.

Assume now that $H_A\to H_B$. We have already excluded $H_p \to G_1$, hence by the connectivity of each $H_p$ neccessarily follows that $\sum_{p\in A}H_p\to \sum_{q\in B}H_q$.
This in turns leads to $\sum_{p\in A} \overrightarrow{C_{lp}}*(I_l,a,b)\to \sum_{q\in B}\overrightarrow{C_{lq}}*(I_l,a,b)$ which is equivalent to $A\leq_{\mathcal P} B$.
\end{enumerate}

These three properties guarantee that gave a full embedding of $(\mathcal D,\leq)$ into $(\mathscr C,\leq)$, which maps every $\sum_{p\in I} \overrightarrow{C_p}$ into the interval $[G,G']_\mathscr C$ in $\mathscr C$. By Theorem~\ref{thm:univ} the universality of $[G,G']_\mathscr C$ follows.
\end{proof}

\section{Alternative elementary proof}
\label{sec:secondproof}

The sparse incomparability lemma holds for ``dense'' classes of 
graphs. Here we establish the fractal property of graphs by a different 
technique which allows us to prove the fractal property of some ``sparse'' classes of graphs
as well.
For example we can reduce the (stronger form of) density for planar graphs to 
the fractal property of the class of planar graphs. But first we formulate
the proof for general graphs. We shall make use of the following two assertions.

\begin{lem}
\label{lem:fatgap}
Given graphs $G_1<G_2$, $G_2$ non-bipartite, there exists connected graphs $H_1$ and $H_2$ with
properties:
\begin{enumerate}
 \item $H_1$ and $H_2$ are homomorphically incomparable, and,
 \item $G_1<H_i<G_2$ for $i=1,2$.
\end{enumerate}
In other words, any non-gap interval in the homomorphism order contains
two incomparable graphs.
\end{lem}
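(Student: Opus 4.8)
The plan is to produce the two incomparable graphs by the same philosophy as the first proof, namely by attaching length-encoded gadgets to $G_1$, but with the sparse graph supplied by the Sparse Incomparability Lemma replaced by an elementary odd-cycle obstruction. First I would reduce to the case that $G_1$ is non-bipartite: if it is not, Theorem~\ref{thm:dense} lets me replace $G_1$ by a non-bipartite graph strictly above it, and any pair of incomparable graphs found inside the narrower interval $[G_1',G_2]_\mathscr C$ lies a fortiori in $[G_1,G_2]_\mathscr C$. I would also pass to connected core representatives so that the attachments below produce connected graphs. Throughout I rely on two elementary facts: a homomorphism never decreases odd girth, and for odd cycles $C_m\to C_n$ holds precisely when $m\ge n$.

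For the construction I would fix two odd integers $m_1\neq m_2$, both larger than $|V_{G_2}|$ and divisibility-incomparable (for instance two distinct odd primes), and define $H_i$ as the connected graph obtained from $\overrightarrow{C_{l\cdot m_i}}*(I_l,a,b)$ joined to a fixed vertex of $G_1$ by a path of length $|V_{G_2}|$, exactly as the graph $H_p$ was assembled in the first proof. The indicator again converts the divisibility order on the cycle lengths into the undirected homomorphism order, so that $H_1$ and $H_2$ inherit the incomparability of $m_1$ and $m_2$ under divisibility guaranteed by Theorem~\ref{thm:univ}; this is property~(1). That each $H_i$ maps to $G_2$, giving the upper bound $H_i\le G_2$, follows as before: the gadget maps onto an odd cycle of $G_2$, the path absorbs the distance $|V_{G_2}|$, and $G_1\to G_2$ extends the map over the attached copy of $G_1$. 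The strict inequality $H_i<G_2$ comes from the girth of the gadget exceeding the length of every odd cycle of $G_2$, which forbids $G_2\to H_i$.

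The crux, and precisely the place where the Sparse Incomparability Lemma was used before, is the strict lower bound $G_1<H_i$, i.e. the non-existence of a homomorphism $H_i\to G_1$. When the odd girth of $G_2$ is strictly smaller than that of $G_1$, a shortest odd cycle of $G_2$ sits inside the gadget, maps into $G_2$ but cannot map into $G_1$, and this alone blocks $H_i\to G_1$. The genuinely delicate case, which I expect to be the main obstacle, is when $G_1$ and $G_2$ share the same odd girth, since then no single short odd cycle separates them; resolving it requires designing the gadget so that a homomorphism to $G_1$ is obstructed by the combined length-and-girth structure of the attachment rather than by one short cycle, and this is where the bulk of the elementary work lies. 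Once this lower bound is secured, the three verified properties yield connected graphs $H_1,H_2$ that are homomorphically incomparable and strictly inside $[G_1,G_2]_\mathscr C$, which is the assertion of the lemma.
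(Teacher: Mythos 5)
Your construction never establishes the strict lower bound $H_i\not\to G_1$, and this is not a deferrable detail: it is the entire content of the lemma. In the paper's first proof that bound is delivered by the graph $F$ from the Sparse Incomparability Lemma, which satisfies $F\le G_2$ but $F\not\le G_1$; you delete $F$ and attach the cycle-and-indicator gadget directly to $G_1$, but the gadget cannot play $F$'s role. Its odd cycles all have length at least $l+2$, where $l$ is at least the length of the longest odd cycle of $G_2$, and an odd cycle maps homomorphically onto every \emph{shorter} odd cycle; after your reduction $G_1$ is non-bipartite, so whenever $\mathrm{oddgirth}(G_1)\le l$ the whole gadget collapses onto a shortest odd cycle of $G_1$ with $a$ and $b$ identified (exactly the argument the paper uses to show $H_p\to G_2$), the connecting path is absorbed, and $H_i\to G_1$. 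Concretely, any core $G_1$ containing a triangle with $G_1<K_4=G_2$ defeats the construction. Your ``easy case'' is argued in the wrong direction: the gadget contains \emph{long} odd cycles, not a shortest odd cycle of $G_2$, and since $G_1\to G_2$ forces $\mathrm{oddgirth}(G_1)\ge\mathrm{oddgirth}(G_2)$, odd-girth considerations can never obstruct the gadget from mapping into $G_1$; and your ``delicate case'' is explicitly left open. No tuning of the cycle lengths can repair this, because cycle invariants alone cannot separate $G_1$ from $G_2$ when both contain odd cycles of the same lengths. (A secondary gap: even granted $H_i\not\to G_1$, your incomparability claim needs an argument that a homomorphism $H_1\to H_2$ carries gadget to gadget rather than into the $G_1$-part, which you do not supply.)

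The paper's proof of Lemma~\ref{lem:fatgap} takes a genuinely different and fully elementary route that you should compare with: it sets $H_1=(G_2\times H)+G_1$ and $H_2=(G_2\times H')+G_1$, where $H$ and $H'$ are Erd\H{o}s graphs of large odd girth and large chromatic number~\cite{Erdos1960}. The large chromatic number is what secures the lower bound --- a homomorphism $G_2\times H\to G_1$ would induce $G_2\to G_1$ by the counting argument of \cite[Theorem 3.20]{Hell2004} --- while the large odd girth blocks $G_2\to G_2\times H$ and $G_2\times H\to G_2\times H'$, giving both strictness and incomparability. Any repair of your approach would have to graft into the gadget some witness $X$ with $X\le G_2$ and $X\not\le G_1$; producing such an $X$ elementarily is exactly what the product-with-a-high-chromatic-high-girth-graph trick accomplishes, so at that point you would have reconstructed the paper's argument rather than bypassed it.
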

\begin{proof}
Proof is a variant of the Ne\v{s}et\v{r}il-Perles' proof of density~\cite{Hell2004}.
Put $$H_1=(G_2\times H)+G_1,$$
where $H$ is a graph that we specify later, $+$ is the disjoint union and $\times$ denotes the direct product.

Then obviously $G_1\leq H_1\leq G_2$. If the odd girth of $G_2$ is larger than the odd
girth of $G_1$ then $G_2\not \to H_1$. If the chromatic number $\chi(H)>| V(G_1)|^{|V(G_2)|}$
then any homomorphism $G_2\times H\to G_1$ induces a homomorphism $G_2\to G_1$ which is absurd (see \cite[Theorem 3.20]{Hell2004}). Thus $G_2\times H\not \to G_1$ and $$G_1<H_1<G_2.$$

We choose $H$ to be a graph with large odd girth and chromatic number (known to exist~\cite{Erdos1960}). This finishes
construction of $H_1$.
(Note that here we use the fact that the odd girth of the product is the maximum of the odd girths of its factors.)
Now we repeat the same argument with the pair $G_1$ and $G_2\times H$, put
$$H_2=(G_2\times H')+G_1.$$
If the odd girth of $H'$ is larger than the odd girth of $G_2\times H$ 
then $G_2\times H\not \to H_2$ (assuming $H$
and thus $G_2\times H$ is connected). Thus in turn $H_1\not \to H_2$.
If $\chi(H')>\max(|V(G_2\times H)|^{|V(G_2)|},|V(G_2)|^{|V(G_1)|})$
then again $G_2\times H'\to G_2\times H$ implies $G_2\to G_2\times H$ which
is absurd.  Similarly $G_2\times H'\to G_1$ implies $G_2\to G_1$ and thus $G_2\times H'\not \to G_2$

We may also assume that graphs $H_1$ and $H_2$ from Lemma~\ref{lem:fatgap}
are connected as otherwise we can join components by a long enough path.
Connectivity also follows from the following folklore fact:

\begin{claim}
\label{claim:paths}
For every connected non-bipartite graph $H$ there exists an integer $l$ such that for any two vertices
$x,y\in V(H)$ and any $l'\geq l$ there exists a homomorphism
$f:P_{l'}\to H$ such that $f(0)=x$ and $f(l')=y$. ($P_{l'}$ is the path
of length $l'$ with vertices $\{0,1,\ldots,l'\}$).
\end{claim}

This concludes the construction of $H_1$ and $H_2$.
\end{proof}

\begin{proof}[Second proof of Theorem~\ref{thm:main}]
Let $G_1<G_2$ be a non-gap, thus $G_2$ is non-bipartite. Assume without loss of generality that $G_2$ is connected. 
Since the homomorphism order is universal, we prove the universality of the interval $(G_1,G_2)$ by embedding the homomorphism
order into it.

Let $H_1$, $H_2$ be two connected graphs given
by Lemma~\ref{lem:fatgap}. We may assume that both $H_1$ and $H_2$ are cores. Let $l$ be the number given by Claim~\ref{claim:paths}
for graph $G_2$. We may assume $l>\max\{|V(H_1)|,\allowbreak |V(H_2)|,\allowbreak | V(G_2)|\}$.
We construct the gadget $I$ consisting of graphs $H_1$, $H_2$ joined together by two paths of length $2l$ and $2l+1$. 
We choose two distinguished vertices $a$, $b$ to be the middle vertices of these two paths, see Figure~\ref{fig:twocycle}. 

\begin{figure}
\centerline{\includegraphics{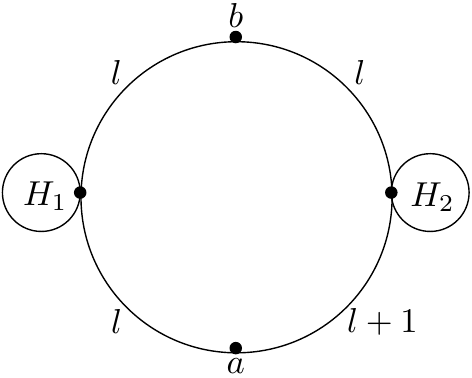}}
\caption{Gadget $I$.}
\label{fig:twocycle}
\end{figure}

We observe that any homomorphism $f:I\to I$ is surjective (because both $G_1$ and $G_2$ are cores), it is also an identity on vertices of $I\setminus (G_1\cup G_2)$ and that 
there exists a homomorphism $f:I\to G_2$ such that $f(a)=f(b)$.

For every oriented graph $G$ define graph
$\Phi(G)$ as $\Phi(G)=G*(I,a,b)$. We know $G_1 < \Phi(G)<G_2$ (as any homomorphisms $f_i:H_i\to G_2$, $i\in \{1,2\}$, can be extended to a homomorphism $\Phi(G)\to G_2$).

We finish the proof by proving $\Phi(G)\to \Phi(G')$ if and only if $G\to G'$.

Assume first that there exists a homomorphism $f:G\to G'$. Consider the function $g$ defined as $f$ on vertices of $G$ and as the unique mapping which maps a copy of $(I,a,b)$ in $G$ corresponding to edge $(u,v)$ to the copy of $(I,a,b)$ in $G'$ corresponding to edge $(f(u),f(v))$. Hence $g$ is a homomorphism.

Let now $g$ be a homomorphism $\Phi(G)\to\Phi(G')$. By the girth assumption and connectivity of $H_1$ and $H_2$ we know that $g$ maps every copy of $H_1$ (or $H_2)$) in $\Phi(G)$ to a copy of $H_1$ (or $H_2$) in $\Phi(G')$. Again, by the girth argument it follows that every copy of the indicator $(I,a,b)$ in $G$ is mapped to a copy of the indicator $(I,a,b)$ in $G'$. But the only copies of $(I,a,b)$ in both $G$ and $G'$ are those corresponding to the edges of $G$ and $G'$.

Since $I$ is a core, it follows that any pair of vertices $(a,b)$ in a copy of $(I,a,b)$ has to be mapped to the vertices $(a,b)$ in any other copy of $(I,a,b)$. As copies of $(I,a,b)$ and hence also the pairs $(a,b)$ correspond to edges of $G'$, it follows that $g$ induces a mapping $f:V(G)\to V(G')$, which is a homomorphism $G\to G'$. 
This argument concludes the second proof of Theorem~\ref{thm:main}.
\end{proof}

\begin{remark}
Note that in this second proof we have an one-to-one correspondence between homomorphisms $G\to G$ and $\Phi(G)\to \Phi(G')$.
\end{remark}


\section{Concluding remarks}

\noindent
{\bf 1.} Gaps on oriented graphs and on more general relational structures are more sophisticated. They were characterized by Ne\v{s}et\v{r}il and Tardif~\cite{Nesetril2000}.
In the same paper, a nice 1-1 correspondence between gaps and dualities has been shown.
Consequently, the full discussion of fractal property of relational structures is more complicated and it will appear elsewhere.

\noindent
{\bf 2.} The whole paper deals with finite graphs but there is no difficulty in
generalizing our results to infinite graphs.

\noindent
{\bf 3.} An interesting question (already considered in~\cite{Nesetril2000}) is: which intervals induce {\em isomorphic orders}.
We provide neither a characterization nor a conjecture in this direction.

\noindent
{\bf 4.} It seems that any natural universal class $\mathcal K$ of structures posesses the \emph{gap-universal dichotomy}: An interval $[A,B]_{\mathcal K}$
in $\mathcal K$ either contains a gap or it contains a copy of $\mathcal K$ itself.
While in general this fails, it remains to be seen, whether this is true for some general class of structures.

\noindent
{\bf 5.} There is a great difference in treating universality, density and fractal property.
Whereas the universality was established in many classes of partial orders (and categories), the density and fractal property was only established for just a few basic classes.
Perhaps this should be investigated in greater depth. Apart from general relational structures (which we hope to treat in another paper) another interesting case is
provided by structures with two equivalences (here the universality is established by \cite{Nevsetvril1971}).

\paragraph{Acknowledgement}
We would like to thank to two anonymous referees for comments and suggestions which improved quality of this paper.

\bibliographystyle{elsarticle-num}

\bibliography{fractal}

\end{document}